\DeclareSymbolFont{cyrillic}{T2A}{cmr}{m}{n}
\DeclareMathSymbol{\Sha}{\mathalpha}{cyrillic}{216}
\theoremstyle{plain}
\newtheorem{X}{X} 
\newtheorem{theorem}[X]{Theorem}
\newtheorem{lemma}[X]{Lemma}
\theoremstyle{definition}
\newtheorem{definition}[X]{Definition}
\newtheorem{remark}[X]{Remark}
\newtheorem{question}[X]{Question}
\begin{document}

\title{A note on the Bloch-Tamagawa space and Selmer groups \emph{}}
\address{Department of Mathematics, University of Maryland, College Park, MD 20742 USA.} 
\email{atma@math.umd.edu}
\urladdr{\href{http://www2.math.umd.edu/~atma}{http://www2.math.umd.edu/~atma}}
\author{Niranjan Ramachandran}

\begin{abstract} For any abelian variety $A$ over a number field, we construct an extension of the Tate-Shafarevich group by the Bloch-Tamagawa space using the recent work of Lichtenbaum and Flach. This gives a new example of a Zagier sequence for the Selmer group of $A$. 
 \end{abstract}

\maketitle

\subsection*{Introduction.}  Let $A$ be an abelian variety over a number field $F$ and $A^{\vee}$ its dual.  B.~Birch and P.~Swinnerton-Dyer, interested in defining the Tamagawa number $\tau(A)$ of $A$, were led to their celebrated conjecture \cite[Conjecture 0.2]{bloch} for the L-function $L(A,s)$ (of $A$ over $F$) which predicts both its order $r$ of vanishing and its leading term $c_A$ at $s=1$.  The difficulty in defining $\tau(A)$ directly is that the adelic quotient $\frac{A(\mathbb A_F)}{A(F)}$ is Hausdorff only when $r=0$, i.e., $A(F)$ is finite. S.~Bloch \cite{bloch} has introduced a semiabelian variety $G$ over $F$ with quotient $A$ such that $G(F)$ is discrete and cocompact in $G(\mathbb A_F)$ \cite[Theorem 1.10]{bloch} and famously proved \cite[Theorem 1.17]{bloch} that the Tamagawa number conjecture - recalled briefly below, see (\ref{tam})  -  for $G$ is equivalent to the Birch-Swinnerton-Dyer conjecture for $A$ over $F$. Observe that $G$ is not a linear algebraic group.  The Bloch-Tamagawa space $X_A = \frac{G(\mathbb A_F)}{G(F)}$ of $A/F$ is compact and Hausdorff.

The aim of this short note is to indicate a functorial construction of a locally compact group $Y_A$
\begin{equation}\label{ext}0 \to X_A \to Y_A \to \Sha(A/F) \to 0,\end{equation} an extension of the Tate-Shafarevich group $\Sha(A/F)$ by $X_A$. The compactness of $Y_A$ is clearly equivalent to the finiteness of $\Sha(A/F)$. This construction would be straightforward if $G(L)$ were discrete in $G(\mathbb A_L)$ for all finite extensions $L$ of $F$. But this is not true (Lemma \ref{hff}): the quotient {\it $$\frac{G(\mathbb A_L)}{G(L)}$$ is not Hausdorff, in general.}

  The very simple idea for the construction of $Y_A$ is: {\it Yoneda's lemma}. Namely, we consider the category of topological $\mathcal G$-modules as a subcategory of the classifying topos $B\mathcal G$ of $\mathcal G$ (natural from the context of  the continuous cohomology of a topological group $\mathcal G$, as in S.~Lichtenbaum \cite{sl1}, M.~Flach \cite{flach}) and construct $Y_A$ via the classifying topos of the Galois group of $F$.

 D.~Zagier \cite{zagier} has pointed out that the Selmer groups ${\rm Sel}_m(A/F)$ (\ref{selm}) can be obtained from certain two-extensions (\ref{don}) of $\Sha(A/F)$ by $A(F)$; these we call Zagier sequences. We show how $Y_A$ provides a new natural  Zagier sequence. In particular, this shows that $Y_A$ is not a split sequence.

Bloch's construction has been generalized to one-motives; it led to the Bloch-Kato conjecture on Tamagawa numbers of motives \cite{bk}; it is close in spirit to Scholl's method of relating non-critical values of L-functions of pure motives to critical values of L-functions of mixed motives \cite[p.~252]{sl2} \cite{scholl2, scholl1}. 
 
\subsection*{Notations.} We write $\mathbb A = \mathbb A_f \times \mathbb R$ for the ring of adeles over $\mathbb Q$; here $\mathbb A_f = \hat{\mathbb Z}\otimes_{\mathbb Z} \mathbb Q$ is the ring of finite adeles. For any number field $K$, we let $\mathcal O_K$ be the ring of integers, $\mathbb A_K $ denote the ring of adeles $\mathbb A \otimes_{\mathbb Q}K$ over $K$; write $\mathbb I_K$ for the ideles. Let $\bar{F}$ be a fixed algebraic closure of $F$ and write $\Gamma = \text{Gal}(\bar{F}/F)$ for the Galois group of $F$. For any abelian group $P$ and any integer $m >0$, we write $P_m$ for the $m$-torsion subgroup of $P$.  A topological abelian group is Hausdorff.

\subsection*{Construction of $Y_A$.} This will use the continuous cohomology of $\Gamma$ via classifying spaces as in  \cite{sl1, flach} to which we refer for a detailed exposition.

For each field $L$ with $F \subset L \subset \bar{F}$, the group $G(\mathbb A_L)$ is a locally compact group. If $L/F$ is Galois, then $$G(\mathbb A_L)^{{\rm Gal}(L/F)} = G(\mathbb A_F).$$  So  $$\mathbb E = \underset{\rightarrow}{\rm lim} ~G(\mathbb A_L),$$ the direct limit of locally compact abelian groups, is equipped with a continuous action of $\Gamma$. The natural map \begin{equation}\label{EE} E:= G(\bar{F}) \hookrightarrow \mathbb E\end{equation} is $\Gamma$-equivariant.
Though the subgroup $G(F) \subset G(\mathbb A_F)$ is discrete, the subgroup $$E \subset \mathbb E$$ fails to be discrete; this failure happens at finite level (see Lemma \ref{hff} below). The non-Hausdorff nature of the quotient $${\mathbb E}/{E}$$ directs us to consider the classifying space/topos. 

Let $Top$ be the site defined by the category of (locally compact) Hausdorff topological spaces with the open covering Grothendieck topology (as in the "gros topos" of \cite[\S2]{flach}). Any locally compact abelian group $M$ defines a sheaf $yM$ of abelian groups on $Top$; this (Yoneda) provides a fully faithful embedding of the (additive, but not abelian) category $Tab$ of locally compact abelian groups into the (abelian) category $\mathcal  Tab$ of sheaves of abelian groups on $Top$. Write $\mathcal Top$ for the category of sheaves of sets on $Top$ and let $y: Top \to \mathcal Top$ be the Yoneda embedding. A generalized topology on a given set $S$ is an object $F$ of $\mathcal Top$ with $F({*}) =S$.  
 
For  any (locally compact) topological group $\mathcal G$, its classifying topos $B\mathcal G$ is the category of objects $F$ of $\mathcal Top$ together with an action $y\mathcal G \times F \to F$. An abelian group object $F$ of $B\mathcal G$ is a sheaf on $\mathcal Top$, together with actions  $y\mathcal G(U) \times F(U) \to F(U)$, functorial in $U$; we write $H^i(\mathcal G, F)$ (objects of $\mathcal Tab$) for the continuous/topological group cohomology of $\mathcal G$ with coefficients in $F$. These arise from the global section functor $$B\mathcal G \to \mathcal Tab, \qquad F \mapsto F^{y\mathcal G}.$$
Details for the following facts can be found in \cite[\S3]{flach} and \cite{sl1}. 
\begin{enumerate} 
\item[(a)] (Yoneda) Any topological $\mathcal G$-module $M$ provides an (abelian group) object $yM$ of $B\mathcal G$; see \cite[Proposition 1.1]{sl1}. 
\item[(b)] If $0\to M \to N$ is a map of topological $\mathcal G$-modules with $M$ homeomorphic to its image in $N$, then the induced map $yM \to yN$ is injective \cite[Lemma 4]{flach}.
\item[(c)] Applying Propositions 5.1 and 9.4 of \cite{flach} to the profinite group $\Gamma$ and any continuous $\Gamma$-module $M$ provide an isomorphism $$H^i(\Gamma, yM) \simeq yH^i_{cts}(\Gamma, M)$$ between this topological group cohomology and the continuous cohomology (computed via continuous cochains). This is also proved in \cite[Corollary 2.4]{sl1}.
\end{enumerate} 

 For any continuous homomorphism $f:M \to N$ of topological abelian groups, the cokernel of $yf: yM \to yN$ is well-defined in $\mathcal Tab$ even if the cokernel of $f$ does not exist in $Tab$. If $f$ is a map of topological $\mathcal G$-modules, then the cokernel of the induced map $yf: yM \to yN$, a well-defined abelian group object of $B\mathcal G$, need not be of the form $yP$.

By (a) and (b) above, the pair of topological $\Gamma$-modules $E \hookrightarrow \mathbb E$ (\ref{EE}) gives rise to a pair $yE \hookrightarrow y\mathbb E$  of objects of $B\Gamma$. Write $\mathcal Y$ for the quotient object $\frac{y\mathbb E}{yE}$.
As $\mathbb E/{E}$ is not Hausdorff (Lemma \ref{hff}), $\mathcal Y$ is not $yN$ for any topological $\Gamma$-module $N$.




\begin{definition}We set $\mathcal Y_A = H^0(\Gamma, \mathcal Y)\in \mathcal Tab$.\end{definition} 

Our main result is the 
\begin{theorem}\label{main}  

(i) $\mathcal Y_A$ is the Yoneda image $yY_A$ of a Hausdorff locally compact topological abelian group $Y_A$. 

(ii)  $X_A$ is an open subgroup of $Y_A$. 

(iii) The group $Y_A$ is compact if and only if $\Sha(A/F)$ is finite. If $Y_A$ is compact, then the index of $X_A$ in  $Y_A$ is equal to $\#\Sha(A/F)$.  
\end{theorem}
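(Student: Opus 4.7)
The plan is to extract Theorem \ref{main} from the long exact sequence of topos-theoretic cohomology attached to the short exact sequence $0 \to yE \to y\mathbb{E} \to \mathcal{Y} \to 0$ in $B\Gamma$, and then to realize the resulting sheaf-theoretic extension as the Yoneda image of a locally compact topological group.

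First I would apply $H^0(\Gamma,-)$ to obtain, in $\mathcal{Tab}$, the four-term exact sequence
\[0 \to H^0(\Gamma, yE) \to H^0(\Gamma, y\mathbb{E}) \to \mathcal{Y}_A \to H^1(\Gamma, yE) \to H^1(\Gamma, y\mathbb{E}).\]
Since $\Gamma$-invariants commute with Yoneda on representables, the first two terms are $yG(F)$ and $yG(\mathbb{A}_F)$. Fact (c) identifies $H^1(\Gamma, yE)$ with $yH^1_{cts}(\Gamma, G(\bar F))$ and $H^1(\Gamma, y\mathbb{E})$ with $yH^1_{cts}(\Gamma, \mathbb{E})$. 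Because $G(F)$ is discrete in $G(\mathbb{A}_F)$, the quotient $X_A$ is Hausdorff and $G(\mathbb{A}_F)\to X_A$ admits local sections, so fact (b) forces the image of $yG(\mathbb{A}_F) \to \mathcal{Y}_A$ to be exactly $yX_A$.

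The main remaining identification is
\[\ker\bigl(H^1_{cts}(\Gamma, G(\bar F)) \to H^1_{cts}(\Gamma, \mathbb{E})\bigr) \;\cong\; \Sha(A/F).\]
Unwinding $\mathbb{E} = \varinjlim_L G(\mathbb{A}_L)$ and using that adeles decompose as a restricted product over places, the cohomology of $\mathbb{E}$ assembles the local Galois cohomologies of $G$ at all places of $F$; the kernel is therefore the local-global kernel $\Sha(G/F)$, which by Bloch's construction of $G$ as a semiabelian extension of $A$ coincides with $\Sha(A/F)$. Together with the previous paragraph this yields the short exact sequence $0 \to yX_A \to \mathcal{Y}_A \to y\Sha(A/F) \to 0$ in $\mathcal{Tab}$. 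Part (i) then reduces to showing $\mathcal{Y}_A$ is representable: since $\Sha(A/F)$ is torsion, it carries the discrete topology, so $y\Sha(A/F)$ is a coproduct of point sheaves indexed by $\xi \in \Sha(A/F)$. Pulling back the extension over each $\xi$ exhibits $\mathcal{Y}_A$ as a disjoint union $\bigsqcup_\xi \mathcal{Y}_A^{(\xi)}$ of $yX_A$-torsors, and each such torsor is representable by a locally compact Hausdorff $X_A$-torsor $X_A^{(\xi)}$; setting $Y_A = \bigsqcup_\xi X_A^{(\xi)}$ with the group law and topology inherited from the extension gives a locally compact Hausdorff abelian group with $yY_A \cong \mathcal{Y}_A$. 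Part (ii) is immediate since $X_A = X_A^{(0)}$ is the clopen component indexed by the trivial class. For (iii), a disjoint union of nonempty compact Hausdorff spaces is compact if and only if the index set is finite, and in that case the coset index $[Y_A : X_A]$ is exactly $\#\Sha(A/F)$.

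The hard part will be the $\Sha$ identification: translating $\varinjlim_L H^1(\mathrm{Gal}(L/F), G(\mathbb{A}_L))$ into a product of local cohomologies at all places, and then matching the semiabelian Shafarevich–Tate group of $G$ with $\Sha(A/F)$ by reading off Bloch's specific toric extension. Everything else is formal bookkeeping in the classifying topos together with the standard fact that an extension of a discrete group by a locally compact Hausdorff group is again locally compact Hausdorff.
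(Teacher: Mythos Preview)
Your overall architecture matches the paper's: the same long exact sequence in $B\Gamma$, the same identification of the low-degree terms with $yG(F)$, $yG(\mathbb A_F)$ and hence $yX_A$, the same reduction to showing $\ker(j)\cong\Sha(A/F)$, and the same appeal to Bloch's isomorphism $\Sha(G/F)\cong\Sha(A/F)$. Your representability argument via $X_A$-torsors indexed by the discrete group $\Sha(A/F)$ is more explicit than the paper's one-line remark (``because $\Sha(A/F)$ is a torsion discrete group'') but amounts to the same thing, and parts (ii) and (iii) then follow identically.

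The genuine divergence is in how you extract $\Sha(G/F)$ from $\ker(j)$. You propose to compute $H^1_{cts}(\Gamma,\mathbb E)$ directly by decomposing adeles into local factors and assembling local Galois cohomologies; you correctly flag this as the hard part, and it is, because the non-discrete topology on $\mathbb E$ obstructs the usual commutation of $H^1$ with the filtered colimit $\varinjlim_L G(\mathbb A_L)$ and with restricted products. The paper bypasses this entirely: it inserts the discrete module $\mathbb E^{\delta}$ between $E$ and $\mathbb E$, factoring $j$ as $H^1_{cts}(\Gamma,E)\xrightarrow{j'}H^1_{cts}(\Gamma,\mathbb E^{\delta})\xrightarrow{k}H^1_{cts}(\Gamma,\mathbb E)$. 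Since $E$ and $\mathbb E^{\delta}$ are discrete, $j'$ is just the ordinary Galois-cohomology map whose kernel is $\Sha(G/F)$ by definition; the only thing left is injectivity of $k$, which follows from a two-line crossed-homomorphism argument using that every $\Gamma$-orbit in $\mathbb E$ is finite. This avoids any actual computation of $H^1_{cts}(\Gamma,\mathbb E)$ and is considerably shorter than the local decomposition you outline.
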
 

 As $\Sha(A/F)$ is a torsion discrete group, the topology of $Y_A$ is determined by that of $X_A$. 


\begin{proof} (of Theorem \ref{main}) The basic point is the proof of (iii).  From the exact sequence $$0 \to yE \to y\mathbb E \to \mathcal Y \to 0$$ of abelian objects  in $B\Gamma$, we get a long exact sequence (in $\mathcal Tab$) 
\begin{align*} 0 \to H^0(\Gamma, yE) \to H^0(\Gamma, y\mathbb E) \to & \\ 
\to H^0(\Gamma, \mathcal Y) \to H^1(\Gamma, yE) \xrightarrow{j} H^1(\Gamma, y\mathbb E) & \to \cdots .\end{align*}

We have the following identities of topological groups: $H^0(\Gamma, yE) = yG(F)$ and $H^0(\Gamma, y\mathbb E) = yG(\mathbb A_F)$, and by \cite[Lemma 4]{flach}, $\frac{yG(\mathbb A_F)}{yG(F)} \simeq yX_A$.  This exhibits $yX_A$ as a sub-object  of $\mathcal Y_A$ and provides the exact sequence
$$ 0 \to yX_A \to \mathcal Y_A \to {\rm Ker}(j) \to 0.$$  If ${\rm Ker}(j) =y\Sha(A/F)$, then $\mathcal  Y_A = yY_A$ for a unique topological abelian group $Y_A$ because $\Sha(A/F)$ is a torsion discrete group. Thus, it suffices to identify ${\rm Ker}(j)$ as $y\Sha(A/F)$. 
Let  $\mathbb E^{\delta}$ denote $\mathbb E$ endowed with the discrete topology; the identity map on the underlying set provides a continuous $\Gamma$-equivariant map $\mathbb E^{\delta} \to \mathbb E$. Since $E$ is a discrete $\Gamma$-module, the inclusion $E \to \mathbb E$ factorizes via $\mathbb E^{\delta}$. By item (c) above, ${\rm Ker}~(j)$ is isomorphic to the Yoneda image of the kernel of the composite map $$H^1_{cts}(\Gamma, E) \xrightarrow{j'} H^1_{cts}(\Gamma, \mathbb E^{\delta}) \xrightarrow{k} H^1_{cts}(\Gamma, \mathbb E).$$ Since $E$ and $\mathbb E^{\delta}$ are discrete $\Gamma$-modules,  the map $j'$ identifies with the map of ordinary Galois cohomology groups $$H^1(\Gamma, E) \xrightarrow{j"} H^1(\Gamma, \mathbb E^{\delta}).$$The traditional definition \cite[Lemma 1.16]{bloch} of $\Sha(G/F)$ is as ${\rm Ker}(j")$.   As $$\Sha(A/F) \simeq \Sha(G/F)$$ \cite[Lemma 1.16]{bloch}, to prove Theorem \ref{main}, all that remains is the injectivity of $k$.  This is straightforward from the standard description of $H^1$ in terms of crossed homomorphisms: if $f: \Gamma \to \mathbb E^{\delta}$ is a crossed homomorphism with $kf$ principal, then there exists $\alpha \in \mathbb E$ with $f: \Gamma \to \mathbb E$ satisfies $$f(\gamma) = \gamma (\alpha) -\alpha \quad \gamma \in \Gamma.$$ This identity clearly holds in both $\mathbb E$ and $\mathbb E^{\delta}$. Since the $\Gamma$-orbit of any element of $\mathbb E$ is finite, the left hand side is a continuous map from $\Gamma$ to $\mathbb E^{\delta}$. Thus, $f$ is already a principal crossed (continuous) homomorphism. So $k$ is injective,  finishing the proof of Theorem \ref{main}. \end{proof} 
 
 \begin{remark} The proof above shows:  If the stabilizer of every element of a topological $\Gamma$-module $N$ is open in $\Gamma$, then the natural map $H^1(\Gamma, N^{\delta}) \to H^1(\Gamma, N)$ is injective. \end{remark} 
\subsection*{Bloch's semi-abelian variety $G$}  \cite{bloch, MR693318}

Write $A^{\vee}(F) = B \times \text{finite}$. By the Weil-Barsotti formula,  $${\rm Ext}^1_F(A, \mathbb G_m) \simeq  A^{\vee}(F).$$ Every point $P \in A^{\vee}(F)$ determines a semi-abelian variety $G_P$ which is an extension of $A$ by $\mathbb G_m$. Let $G$ be the semiabelian variety determined by $B$: 

\begin{equation}\label{semi}0 \to T \to G \to A \to 0,\end{equation} 
an extension of $A$ by the torus $T =Hom(B, \mathbb G_m)$.  
The semiabelian variety $G$ is the Cartier dual \cite[\S10]{pd} of the one-motive $$[B \to A^{\vee}].$$ 
The sequence (\ref{semi}) provides (via Hilbert Theorem 90) \cite[(1.4)]{bloch} the following exact sequence
\begin{equation}\label{myst}
0 \to \frac{T(\mathbb A_F)}{T(F)} \to \frac{G(\mathbb A_F)}{G(F)} \to \frac{A(\mathbb A_F)}{A(F)} \to 0. 
\end{equation} It is worthwhile to contemplate this mysterious sequence: the first term is a Hausdorff, non-compact group and the last is a compact  non-Hausdorff group, but the middle term is a compact Hausdorff group!

\begin{lemma}\label{hff} Consider a field $L$ with $F \subset L \subset \bar{F}$. The group $G(L)$ is a discrete subgroup of $G(\mathbb A_L)$ if and only if $A(K)\subset A(L)$ is of finite index.  
\end{lemma}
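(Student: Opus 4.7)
The plan is to view $G$ as a semi-abelian variety over $L$ and reduce the discreteness question to Bloch's theorem \cite[Theorem 1.10]{bloch} applied over the number field $L$. Since $T$ is a split torus, Hilbert 90 yields the parallel exact sequences
$$0 \to T(L) \to G(L) \to A(L) \to 0 \quad \text{and} \quad 0 \to T(\mathbb A_L) \to G(\mathbb A_L) \to A(\mathbb A_L) \to 0,$$
while the extension class of $G_L := G \times_F L$ in $\mathrm{Ext}^1_L(A_L, T) \cong \mathrm{Hom}(B, A^\vee(L))$ is the composition $B \hookrightarrow A^\vee(F) \hookrightarrow A^\vee(L)$.

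For the reverse direction I would assume $A(F) \subset A(L)$ has finite index, so $\mathrm{rank}(B) = \mathrm{rank}(A(F)) = \mathrm{rank}(A(L)) = \mathrm{rank}(A^\vee(L))$; hence $B$ has finite index in the free part $\widetilde B$ of $A^\vee(L)$. Letting $\widetilde G$ be the Bloch semi-abelian variety of $A$ over $L$ built from $\widetilde B$, \cite[Theorem 1.10]{bloch} applied over $L$ gives $\widetilde G(L)$ discrete in $\widetilde G(\mathbb A_L)$. The finite-index inclusion $B \subset \widetilde B$ of character lattices dualizes to an isogeny of split tori $T_{\widetilde B} \twoheadrightarrow T$, and compatibility of extension classes lifts it to an isogeny $\phi: \widetilde G \to G_L$ over $L$ with finite kernel. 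The induced map $\phi_{\mathbb A_L}$ has compact kernel and open image, so the image of the discrete set $\widetilde G(L)$ is discrete in $G(\mathbb A_L)$; the remainder of $G(L)$ differs from this image by a finite adelic obstruction controlled by $H^1$ of $\ker\phi$, and one concludes that $G(L)$ itself is discrete.

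For the forward direction, I would assume for contradiction that $A(F)$ has infinite index in $A(L)$, so $\mathrm{rank}(A(L)) > \mathrm{rank}(T)$, and exhibit a nonzero sequence in $G(L)$ converging to $0$ in $G(\mathbb A_L)$. Picking $a \in A(L)$ of infinite order modulo $A(F)$, compactness of $A(\mathbb A_L)$ furnishes integers $m_k \to \infty$ with $m_k a \to 0$ in $A(\mathbb A_L)$. Lifting $a$ to $g \in G(L)$, each $m_k g \in G(L) \setminus \{0\}$ projects to $m_k a$. The rank deficiency $\mathrm{rank}(T) < \mathrm{rank}(A(L))$ forces the pairing $A(L) \to T(\mathbb A_L)/T(L)$ induced by the extension class of $G$ to have non-trivial kernel modulo torsion, so after correcting each $m_k g$ by a suitable $t_k \in T(L)$ one arranges $m_k g + t_k \to 0$ in $G(\mathbb A_L)$ while the sum remains nonzero, contradicting discreteness. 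The main obstacle is exactly this last step: showing that rank-deficiency of $T$ relative to $A(L)$ forces degeneracy of the adelic extension-class pairing. This is essentially the converse of the non-degeneracy computation at the heart of Bloch's original theorem, and making it rigorous is the technical crux of the proof.
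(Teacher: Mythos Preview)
Your reverse direction is sound and parallels the paper: both reduce to Bloch's theorem applied over $L$ (the paper simply invokes Bloch for the semiabelian variety attached to a finite-index free subgroup of $A^\vee(L)$, while you route through an explicit isogeny; either works).

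The genuine gap is the forward direction, which you yourself flag as unfinished. The paper sidesteps your sequence construction entirely by moving in the \emph{opposite} direction from your isogeny: rather than passing to something isogenous to $G_L$, it \emph{enlarges} $G_L$. Choose $C\simeq\mathbb Z^s\subset A^\vee(L)$ so that $B\times C$ has finite index in $A^\vee(L)$, and let $G'$ be the semiabelian variety over $L$ attached to $B\times C$. Then there is an exact sequence $0\to T''\to G'\to G\to 0$ with $T''=\mathrm{Hom}(C,\mathbb G_m)$ a split torus of dimension $s$, and Bloch's theorem over $L$ makes $G'(\mathbb A_L)/G'(L)$ compact Hausdorff. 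Hilbert~90 for $T''$ then exhibits $G(\mathbb A_L)/G(L)$ as the quotient of this compact Hausdorff group by the image of $T''(\mathbb A_L)/T''(L)\cong(\mathbb I_L/L^*)^s$. That quotient is Hausdorff if and only if this subgroup is closed, hence compact; but the idele class group is non-compact, so Hausdorffness fails exactly when $s>0$. This single construction handles both implications at once and replaces your incomplete degeneracy-of-pairing argument with the elementary fact that a non-compact subgroup of a compact group cannot be closed.
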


\begin{proof} Pick a subgroup $C\simeq \mathbb Z^s$ of $A^{\vee}(L)$ such that $B \times C$ has finite index in $A^{\vee}(L)$. The Bloch semiabelian variety $G'$ over $L$ determined by $B \times C$ is an extension of $A$ by $T' = Hom(B \times C, \mathbb G_m)$. One has an exact sequence $ 0\to T" \to G' \to G \to 0$ defined over $L$ where $T"= Hom(C, \mathbb G_m)$ is a split torus of dimension $s$. Consider the commutative diagram with exact rows and columns

$
\begin{CD}
{} @. 0 @. 0@. {} \\
@. @VVV @VVV @. @.\\
{} @.  \frac{T"(\mathbb A_L)}{T"(L)} @=\frac{T"(\mathbb A_L)}{T"(L)} @. {} \\  
@. @VVV @VVV @. @.\\
0 @>>> \frac{T'(\mathbb A_L)}{T'(L)} @>>> \frac{G'(\mathbb A_L)}{G'(L)} @>>> \frac{A(\mathbb A_L)}{A(L)}\\
@. @VVV @VVV @| \\ 
0 @>>> \frac{T(\mathbb A_L)}{T(L)} @>>> \frac{G(\mathbb A_L)}{G(L)} @>>> \frac{A(\mathbb A_L)}{A(L)}\\
@. @VVV @VVV @VVV\\
{} @. 0 @. 0 @. 0.\\ 
\end{CD}
$

$${} $$ 
 The proof of surjectivity in the columns follows Hilbert Theorem 90 applied to $T"$ \cite[(1.4]{bloch}). The Bloch-Tamagawa space $X'_A = \frac{G'(\mathbb A_L)}{G'(L)}$ for $A$ over $L$ is compact and Hausdorff; its quotient by $$\frac{T"(\mathbb A_L)}{T"(L)} = (\frac{\mathbb I_L}{L^*})^s$$ is $\frac{G(\mathbb A_L)}{G(L)}.$ The quotient is Hausdorff if and only if $s =0$.\end{proof} 

A more general form of Lemma \ref{hff} is implicit in \cite{bloch}: For any one-motive $[N \xrightarrow{\phi} A^{\vee}]$ over $F$, write $V$ for its Cartier dual (a semiabelian variety), and  put $$X = \frac{V(\mathbb A_F)}{V(F)}.$$ We assume that the $\Gamma$-action on $N$ is trivial. Then $X$ is compact if and only if ${\rm Ker}(\phi)$ is finite; $X$ is Hausdorff if and only if the image of $\phi$ has finite index in $A^{\vee}(F)$. 

\subsection*{Tamagawa numbers.} 
Let $H$ be  a semisimple algebraic group over $F$. Since $H(F)$ embeds discretely in $H(\mathbb A_F)$, the adelic space $X_H= \frac{H(\mathbb A_F)}{H(F)}$ is Hausdorff. The Tamagawa number $\tau(H)$ is the volume of $X_H$ relative to a canonical (Tamagawa) measure \cite{tt}. The Tamagawa number theorem \cite{kottwitz, asok} (which was formerly a conjecture) states 
\begin{equation}\label{tam}\tau(H) = \frac{\#{\rm Pic}(H)_{\rm torsion}}{\#\Sha(H)}\end{equation} where ${\rm Pic}(H)$ is the Picard group and $\Sha(H)$ the Tate-Shafarevich set of $H/F$ (which measures the failure of the Hasse principle). Taking $H= SL_2$ over $\mathbb Q$ in (\ref{tam}) recovers Euler's result $$\zeta(2) = \frac{\pi ^2}{6}.$$

The above formulation (\ref{tam}) of the Tamagawa number theorem is due to T.~Ono \cite{ono, 44360} whose study of the behavior of $\tau$ under an isogeny   explains the presence of ${\rm Pic}(H)$, and reduces the semisimple case to the simply connected case.  The original form of the theorem (due to A.~Weil) is that $\tau(H) =1$ for split simply connected $H$. The Tamagawa number theorem (\ref{tam}) is valid, more generally, for any connected linear algebraic group $H$ over $F$. The case $H= \mathbb G_m$ becomes the Tate-Iwasawa  \cite{tate, iwasawa1} version of the analytic class number formula: the residue at $s=1$ of the zeta function $\zeta(F,s)$ is the volume of the (compact) unit idele class group $\mathbb J^1_F = {\rm Ker}(|-|: \frac{\mathbb I_F}{F^*} \to \mathbb R_{>0})$ of $F$. Here $|-|$ is the absolute value or norm map on $\mathbb I_F$.

\subsection*{Zagier extensions.} \cite{zagier} The $m$-Selmer group ${\rm Sel}_m(A/F)$ (for $m>0$) fits into an exact sequence\begin{equation}\label{selm} 0 \to \frac{A(F)}{m A(F)} \to {\rm Sel}_m(A/F) \to \Sha(A/F)_m\to 0.\end{equation}

D.~Zagier \cite[\S 4]{zagier} has pointed out that while the $m$-Selmer sequences (\ref{selm}) (for all $m >1$) cannot be induced by a sequence (an extension of $\Sha(A/F)$ by $A(F)$) 
$$ 0 \to A(F) \to ? \to \Sha(A/F) \to 0,$$ they can be induced by an exact sequence of the form 
\begin{equation}\label{don}
0 \to A(F) \to \mathcal A \to \mathcal S \to \Sha(A/F) \to 0\end{equation} and gave examples of such (Zagier) sequences. Combining (\ref{ext}) and (\ref{myst})  above provides the following natural Zagier sequence
$$ 0 \to A(F) \to A(\mathbb A_F) \to \frac{Y_A}{T(\mathbb A_F)} \to \Sha(A/F) \to 0.$$
 Write $A(\mathbb A_{\bar F})$ for the direct limit of the groups $A(\mathbb A_L)$ over all finite subextensions $F \subset L \subset \bar{F}$. The previous sequence discretized (neglect the topology) becomes $$0 \to A(F) \to A(\mathbb A_F) \to (\frac{A(\mathbb A_{\bar F})}{A(\bar{F})})^{\Gamma} \to \Sha(A/F) \to 0.$$ 
 
 \begin{remark}  (i) For an elliptic curve $E$ over $F$, Flach has indicated how to extract a canonical Zagier sequence via $\tau_{\ge 1}{\tau_{\le 2}}R\Gamma(S_{et}, \mathbb G_m)$ from any regular arithmetic surface $S\to \mathrm{Spec}~\mathcal O_F$ with $E= S \times_{\mathrm{Spec}~\mathcal O_F} \mathrm{Spec}~F$. 
 
  (ii) It is well known that the class group ${\rm Pic}(\mathcal O_F)$ is analogous to $\Sha(A/F)$ and the unit group $\mathcal O_F^{\times}$ is analogous to $A(F)$.  Iwasawa \cite[p.~354]{iwasawa2} proved that the compactness of $\mathbb J^1_F$  is equivalent to the two basic finiteness results of algebraic number theory: (i) ${\rm Pic}(\mathcal O_F)$ is finite; (ii) $\mathcal O_F^{\times}$ is finitely generated. His result provided a beautiful new proof of these two finiteness theorems. Bloch's result \cite[Theorem 1.10]{bloch} on the compactness of $X_A$ uses the Mordell-Weil theorem (the group $A(F)$ is finitely generated) and the non-degeneracy of the N\'eron-Tate pairing on $A(F) \times A^{\vee}(F)$ (modulo torsion). 
  \end{remark} 
  \begin{question} 
  {\it Can one define directly a space attached to $A/F$ whose compactness implies the Mordell-Weil theorem for $A$ and the finiteness of $\Sha(A/F)$?}
 \end{question}
 
 \subsection*{Acknowledgements.}  I thank C.~Deninger,  M.~Flach, S.~Lichtenbaum, J.~Milne, J.~Parson, J.~Rosenberg, L.~Washington, and B.~Wieland for interest and encouragement and the referee for helping correct some inaccuracies in an earlier version of this note.  I  have been inspired by the ideas of Bloch, T.~Ono (via  B.~Wieland \cite{44360}) on Tamagawa numbers, and D.~Zagier  on the Tate-Shafarevich group \cite{zagier}. I am indebted to Ran Cui for alerting me to Wieland's ideas \cite{44360}.

\def\cprime{$'$} \def\cprime{$'$}


\begin{thebibliography}{10}

\bibitem{asok}
Aravind Asok, Brent Doran, and Frances Kirwan.
\newblock Yang-{M}ills theory and {T}amagawa numbers: the fascination of
  unexpected links in mathematics.
\newblock {\em Bull. Lond. Math. Soc.}, 40(4):533--567, 2008.

\bibitem{bloch}
S.~Bloch.
\newblock A note on height pairings, {T}amagawa numbers, and the {B}irch and
  {S}winnerton-{D}yer conjecture.
\newblock {\em Invent. Math.}, 58(1):65--76, 1980.

\bibitem{bk}
Spencer Bloch and Kazuya Kato.
\newblock {$L$}-functions and {T}amagawa numbers of motives.
\newblock In {\em The {G}rothendieck {F}estschrift, {V}ol.\ {I}}, volume~86 of
  {\em Progr. Math.}, pages 333--400. Birkh\"auser Boston, Boston, MA, 1990.

\bibitem{pd}
Pierre Deligne.
\newblock Th\'eorie de {H}odge. {III}.
\newblock {\em Inst. Hautes \'Etudes Sci. Publ. Math.}, 44:5--77, 1974.

\bibitem{flach}
M.~Flach.
\newblock Cohomology of topological groups with applications to the {W}eil
  group.
\newblock {\em Compos. Math.}, 144(3):633--656, 2008.



\bibitem{iwasawa2}
Kenkichi Iwasawa.
\newblock On the rings of valuation vectors.
\newblock {\em Ann. of Math. (2)}, 57:331--356, 1953.

\bibitem{iwasawa1}
Kenkichi Iwasawa.
\newblock Letter to {J}. {D}ieudonn\'e.
\newblock In {\em Zeta functions in geometry ({T}okyo, 1990)}, volume~21 of
  {\em Adv. Stud. Pure Math.}, pages 445--450. Kinokuniya, Tokyo, 1992.

\bibitem{kottwitz}
Robert~E. Kottwitz.
\newblock Tamagawa numbers.
\newblock {\em Ann. of Math. (2)}, 127(3):629--646, 1988.

\bibitem{sl2}
Stephen Lichtenbaum.
\newblock Euler characteristics and special values of zeta-functions.
\newblock In {\em Motives and algebraic cycles}, volume~56 of {\em Fields Inst.
  Commun.}, pages 249--255. Amer. Math. Soc., Providence, RI, 2009.

\bibitem{sl1}
Stephen Lichtenbaum.
\newblock The {W}eil-\'etale topology for number rings.
\newblock {\em Ann. of Math. (2)}, 170(2):657--683, 2009.

\bibitem{MR693318}
J.~Oesterl{\'e}.
\newblock Construction de hauteurs archim\'ediennes et {$p$}-adiques suivant la
  methode de {B}loch.
\newblock In {\em Seminar on {N}umber {T}heory, {P}aris 1980-81 ({P}aris,
  1980/1981)}, volume~22 of {\em Progr. Math.}, pages 175--192. Birkh\"auser
  Boston, Boston, MA, 1982.

\bibitem{ono}
Takashi Ono.
\newblock On {T}amagawa numbers.
\newblock In {\em Algebraic {G}roups and {D}iscontinuous {S}ubgroups ({P}roc.
  {S}ympos. {P}ure {M}ath., {B}oulder, {C}olo., 1965)}, pages 122--132. Amer.
  Math. Soc., Providence, R.I., 1966.

\bibitem{scholl2}
A.~J. Scholl.
\newblock Extensions of motives, higher {C}how groups and special values of
  {$L$}-functions.
\newblock In {\em S\'eminaire de {T}h\'eorie des {N}ombres, {P}aris, 1991--92},
  volume 116 of {\em Progr. Math.}, pages 279--292. Birkh\"auser Boston,
  Boston, MA, 1993.

\bibitem{scholl1}
A.~J. Scholl.
\newblock Height pairings and special values of {$L$}-functions.
\newblock In {\em Motives ({S}eattle, {WA}, 1991)}, volume~55 of {\em Proc.
  Sympos. Pure Math.}, pages 571--598. Amer. Math. Soc., Providence, RI, 1994.

\bibitem{tt}
Tsuneo Tamagawa.
\newblock Ad\`eles.
\newblock In {\em Algebraic {G}roups and {D}iscontinuous {S}ubgroups ({P}roc.
  {S}ympos. {P}ure {M}ath., {B}oulder, {C}olo., 1965)}, pages 113--121. Amer.
  Math. Soc., Providence, R.I., 1966.

\bibitem{tate}
J.~T. Tate.
\newblock Fourier analysis in number fields, and {H}ecke's zeta-functions.
\newblock In {\em Algebraic {N}umber {T}heory ({P}roc. {I}nstructional {C}onf.,
  {B}righton, 1965)}, pages 305--347. Thompson, Washington, D.C., 1967.

\bibitem{44360}
Ben~Wieland (http://mathoverflow.net/users/4639/
\newblock ben-wieland).
\newblock Why are tamagawa numbers equal to Pic/Sha? MathOverflow.
\newblock URL:http://mathoverflow.net/q/44360 (version: 2010-10-31).

\bibitem{zagier}
Don Zagier.
\newblock The {B}irch-{S}winnerton-{D}yer conjecture from a naive point of
  view.
\newblock In {\em Arithmetic algebraic geometry ({T}exel, 1989)}, volume~89 of
  {\em Progr. Math.}, pages 377--389. Birkh\"auser Boston, Boston, MA, 1991.

\end{thebibliography}
\end{document}